\documentclass[journal,twoside,web]{ieeecolor}
\usepackage{lcsys}

\usepackage{etoolbox}
\makeatletter
\@ifundefined{color@begingroup}%
  {\let\color@begingroup\relax
   \let\color@endgroup\relax}{}%
\def\fix@ieeecolor@hbox#1{%
  \hbox{\color@begingroup#1\color@endgroup}}
\patchcmd\@makecaption{\hbox}{\fix@ieeecolor@hbox}{}{\FAILED}
\patchcmd\@makecaption{\hbox}{\fix@ieeecolor@hbox}{}{\FAILED}

\usepackage[scaled=0.9]{helvet}
\usepackage[T1]{fontenc}

\usepackage{graphicx}
\usepackage[colorlinks=true,citecolor=cyan,linkcolor=red,urlcolor=cyan]{hyperref}
\usepackage{amsmath,amssymb,amsthm}
\usepackage{mathrsfs}
\usepackage{bm}
\usepackage{url}
\usepackage{newtxmath}
\usepackage{booktabs}

\newtheorem{proposition}{Proposition}
\newtheorem{theorem}{Theorem}
\newtheorem{remark}{Remark}
\allowdisplaybreaks
\begin{document}

\title{Conformal Margins for Electrical Transmission Capacity Under Fixed Balancing Policies}
\author{Sushanth Balaraman$^{\star,\dagger}$, Shriram Srinivasan$^\dagger$, Kaarthik Sundar$^\dagger$
\thanks{$^\star$ Dept. of Computer Science, Texas A\&M University, College Station, TX}
\thanks{$^\dagger$ Los Alamos National Laboratory, Los Alamos, NM}
\thanks{Corresponding author: \texttt{kaarthik@lanl.gov}}
}

\maketitle
\thispagestyle{empty}
\begin{abstract}
This article presents a data-driven approach to minimally adjust forecast-based steady-state generation dispatch in electrical transmission networks. The goal is to manage congestion caused by uncertainty in demand and renewable generation under a fixed automatic generation control (AGC) policy operating without saturation. Under this policy, we first show that the post-AGC residual injection distribution is invariant to the scheduled generation dispatch. This result separates line loading into a dispatch-dependent baseline and an uncertainty-induced residual component. Using historical forecast errors, we then apply split conformal prediction to construct transmission-line margins with distribution-free, finite-sample linewise or joint coverage guarantees. We also calibrate the aggregate pre-AGC mismatch to determine the generation headroom required for the prescribed AGC response. We incorporate these calibrated quantities into a convex secure re-dispatch problem that minimally adjusts the nominal schedule while reserving sufficient transmission capacity and AGC headroom. The resulting corrected dispatch provides finite-sample probabilistic guarantees on per-line security and unsaturated AGC operation. Case studies on the RTS-GMLC system demonstrate substantial reductions in out-of-sample line-limit violations with modest operating-cost increases.
\end{abstract}

\begin{IEEEkeywords}
Conformal split calibration, singular value decomposition, congestion modes, automatic generation control, uncertainty
\end{IEEEkeywords}

\section{Introduction} \label{sec:intro}
Electric transmission network congestion, arising when power flows approach or exceed line limits, remains a persistent challenge for power grid operators because scheduled generation and demand rarely coincide with their real-time realizations \cite{pillay2015congestion}. Consequently, net-injection uncertainty and balancing actions can overload lines that are secure under the nominal dispatch computed by a deterministic optimal power flow (OPF), rendering the system insecure under plausible forecast errors. 
Stochastic, robust, and chance-constrained OPF formulations address the aforementioned issue by incorporating uncertainty directly into scheduling \cite{roald2023power}. These approaches typically follow a one-step paradigm, jointly optimizing the nominal dispatch and balancing response under uncertainty while enforcing transmission constraints. However, their modeling and computational demands can hinder deployment in large-scale operational workflows. System operators therefore often rely on scalable deterministic dispatch procedures \cite{zhao2023viability}, while real-time imbalances are managed through Automatic Generation Control (AGC), which allocates the aggregate mismatch among generators according to prescribed participation factors \cite{jaleeli1992understanding}. Although AGC restores power balance, it does not explicitly enforce transmission limits and can therefore create or exacerbate congestion.

This article develops a two-step alternative under a direct-current (DC) power-flow model. The first step retains the operator’s existing deterministic dispatch, while the second applies a lightweight corrective layer that minimally re-dispatches generation to mitigate uncertainty-induced congestion and reserve sufficient headroom for a fixed, unsaturated AGC policy. The key observation is that, with fixed participation factors and exogenous forecast errors, the post-AGC residual distribution is invariant to the candidate re-dispatch, separating dispatch-dependent baseline loading from uncertainty-induced deviations. This allows transmission-line safety margins and aggregate-mismatch bounds, which determine the required AGC headroom, to be calibrated independently of the corrected schedule. These quantities are then enforced through tightened line and generation-capacity constraints in a convex secure re-dispatch problem. Thus, the method integrates into existing dispatch workflows without modifying the underlying OPF formulation or solver. We adopt the DC model to isolate the proposed conformally calibrated framework, leaving extension to linearized alternating-current (AC) models for future work.

The contributions of this article are threefold. First, we show that, under a fixed unsaturated AGC policy and exogenous forecast errors, the post-AGC residual distribution is invariant to the candidate re-dispatch. Second, we use split conformal prediction \cite{pmlr-v128-vovk20a} to obtain distribution-free, finite-sample linewise or joint coverage guarantees for transmission-line deviations, and to calibrate the aggregate-mismatch range required to reserve sufficient AGC headroom, i.e., the generation capacity that must remain available to accommodate the prescribed AGC response. Third, we incorporate the resulting transmission margins and AGC-headroom requirements into a convex secure re-dispatch problem that minimally adjusts the nominal schedule while providing finite-sample probabilistic guarantees on line security and unsaturated AGC operation.

\section{Mathematical Model} \label{sec:model}
\subsection{Network Loading Map and Congestion Energy}
We model the transmission network as a connected graph $\mathcal G=(\mathcal N,\mathcal E)$ with $n$ buses and $m$
transmission lines. After assigning an arbitrary orientation to each line, let $A\in\mathbb R^{m\times n}$ denote the line-bus incidence matrix, $B \triangleq \operatorname{diag}(b_\ell)$ denote the diagonal matrix of lines' susceptance, and define the weighted Laplacian $L \triangleq A^\top B A$. For a connected $\mathcal G$, $\operatorname{null}(L)$ equals the $\operatorname{span}\{\bm 1\}$. Under the lossless steady-state DC power-flow model, the net nodal injection vector $\bm p\in\mathbb R^n$ is balanced, i.e., $\bm 1^\top\bm p=0$. The bus voltage phase angles and line flows are therefore given, up to an arbitrary reference angle, by 
\begin{gather}
    \bm\theta=L^\dagger\bm p,
    \qquad
    \bm f=BA L^\dagger\bm p,
\end{gather}
where $L^\dagger$ is the Moore--Penrose pseudo-inverse of $L$. Let $f_\ell^{\max}$ denote the positive thermal limit of line $\ell$, and define $D \triangleq \operatorname{diag}(1/f_{\ell}^{\max})$. Then, the net nodal injection-to-line-loading map is defined as 
\begin{equation}
    K\triangleq D B A L^\dagger,
    \qquad
    \bm \ell(\bm p)\triangleq K\bm p.
    \label{eq:congestion-linear-map}
\end{equation}
We summarize the network-wide transmission loading by the scalar
\begin{equation}
    \mathscr E(\bm p)
    \triangleq 
    \sum_{\ell\in\mathcal E}
    \left(\frac{f_\ell}{f_\ell^{\max}}\right)^2
    =
    \bm p^\top K^\top K\bm p = \|\bm\ell(\bm p)\|_2^2.
    \label{eq:congestion-energy}
\end{equation}
We refer to $\mathscr E$ as the \emph{congestion energy}. It is a dimensionless measure of the aggregate squared line-loading norm, with each line weighted by its thermal capacity. A line operating at its limit contributes a value of one to $\mathscr E$, while an overloaded line contributes a value greater than one. Although $\mathscr E$ is not a substitute for enforcing individual line limits, it provides a smooth scalar measure of overall thermally normalized transmission stress. 
\subsection{Balancing Policy and Residual Injection} \label{subsec:agc}
Let $\bar{\bm p}\in\mathbb R^n$ denote the forecast-based net-injection vector obtained by solving a deterministic DC-OPF.  $\bar{\bm p}$ satisfies the nodal power-balance condition $\bm 1^\top\bar{\bm p}=0$ and the nominal operating constraints included in the DC-OPF. 
Let $\bm\omega\in\mathbb R^n$ denote the net-injection forecast error realized after $\bar{\bm p}$ is determined. 
Before balancing, the realized net-injection is $\bar{\bm p}+\bm\omega$, where $\bm 1^\top\bm\omega$ is the aggregate power imbalance.
We consider an AGC policy with participation vector $\bm\alpha\in\mathbb R_+^n$, satisfying $\bm 1^\top\bm\alpha=1$. 
The participation vector is either specified separately or determined as part of the deterministic DC-OPF and is held fixed when the forecast error is realized.
In response to the aggregate imbalance, AGC applies the balancing adjustment
\begin{gather}
    \Delta\bm p^{\mathrm{AGC}} =-\bm\alpha\left(\bm 1^\top\bm\omega\right), \label{eq:agc}
\end{gather}
and post-balancing injection is
\begin{gather}
    \bm p^{\mathrm{actual}} =
    \bar{\bm p} + \bm\omega
    -\bm\alpha \left(\bm 1^\top\bm\omega\right) =
    \bar{\bm p}+A_{\alpha}\bm\omega,
    \label{eq:actual-injection} \\
    \text{where } A_{\alpha} \triangleq I-\bm\alpha\bm 1^\top .
    \label{eq:projection}
\end{gather}
Since $\bm 1^\top A_{\alpha}=0$, AGC removes the aggregate mismatch and ensures
$\bm 1^\top\bm p^{\mathrm{actual}}=0$.

We define the \emph{post-AGC residual injection} as:
\begin{equation}
    \bm r
    \triangleq
    \bm p^{\mathrm{actual}}-\bar{\bm p}
    = A_{\alpha}\bm\omega .
    \label{eq:agc-residual}
\end{equation}
Thus, $\bm 1^\top\bm r=0$ for every realization. Decomposing the residual into its mean and fluctuating components yields 
\begin{gather}
    \bm r=\bm\mu_r+\tilde{\bm r},
    \quad
    \bm\mu_r\triangleq\mathbb E[\bm r],
    \quad
    \mathbb E[\tilde{\bm r}]=0 
    \quad \Sigma_{\tilde r} \triangleq \mathbb E[\tilde{\bm r} \tilde{\bm r}^\top].
    \label{eq:residual-decomposition}
\end{gather}
Accordingly, the actual line-loading vector is
\begin{equation}
    \bm \ell(\bm p^{\mathrm{actual}})
    = K(\bar{\bm p}+\bm r)
     = K(\bar{\bm p}+\bm\mu_r) +K\tilde{\bm r}.
    \label{eq:actual-line-loading}
\end{equation}
Eq.~\eqref{eq:actual-line-loading} separates the deterministic baseline loading from the centered uncertainty-induced deviation.

\begin{proposition}[Set-point invariance under fixed AGC] \label{prop:invariance}
Suppose the distribution of the primitive forecast error $\bm\omega$ is exogenous to the scheduled injection and that the same fixed, unsaturated AGC policy $A_{\alpha}$ applies before and after a change in $\bar{\bm p}$. Then the distribution of $\bm r$, and consequently $\bm\mu_r$ and $\Sigma_{\tilde r}$, is invariant to  $\bar{\bm p}$.
\end{proposition}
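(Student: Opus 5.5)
The argument is a pushforward computation. By \eqref{eq:agc-residual}, the residual is $\bm r = A_{\alpha}\bm\omega$, where $A_{\alpha}=I-\bm\alpha\bm 1^\top$ depends only on the participation vector $\bm\alpha$ and not on $\bar{\bm p}$. Fix two scheduled injections $\bar{\bm p}$ and $\bar{\bm p}'$. The plan is to show that the laws of $\bm r$ and $\bm r'$ coincide as probability measures on $\mathbb R^n$, and then read off the moments.

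\textbf{Step 1: the residual is a fixed, $\bar{\bm p}$-free map of $\bm\omega$.} The key modelling fact comes from the unsaturated hypothesis: the AGC response is exactly the linear rule \eqref{eq:agc} for every realization, with no clipping at generator limits. Clipping would make the response depend on the headroom $\bar{\bm p}$. So the measurable map $g(\bm\omega)\triangleq A_{\alpha}\bm\omega$ is the same under both schedules, since the same fixed policy $A_{\alpha}$ applies before and after the change.

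\textbf{Step 2: push the common law of $\bm\omega$ forward.} By exogeneity, $\bm\omega$ has a law $P_\omega$ that is unchanged when $\bar{\bm p}$ changes. Then for every Borel set $S\subseteq\mathbb R^n$,
\[
\Pr(\bm r\in S)=P_\omega\bigl(g^{-1}(S)\bigr)=\Pr(\bm r'\in S),
\]
so the law of $\bm r$ is $P_\omega\circ g^{-1}$ in both cases. This establishes the invariance of the distribution.

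\textbf{Step 3: invariance of the moments.} Whenever the first two moments of $\bm\omega$ exist, linearity gives
\[
\bm\mu_r=A_{\alpha}\mathbb E[\bm\omega], \qquad \Sigma_{\tilde r}=A_{\alpha}\operatorname{Cov}(\bm\omega)A_{\alpha}^\top .
\]
Both expressions are free of $\bar{\bm p}$. More simply, they are functionals of the law of $\bm r$, so they are invariant by Step 2.

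\textbf{Main obstacle.} The mathematics here is trivial. The real work is stating precisely what ``unsaturated'' and ``exogenous'' mean. I would make explicit that unsaturation means $\bar{\bm p}-\bm\alpha(\bm 1^\top\bm\omega)$ stays within generator limits almost surely (or on the event considered), so that \eqref{eq:agc} holds exactly. I would also state that the law of $\bm\omega$ does not depend on $\bar{\bm p}$. Without either condition the map $g$, or the measure $P_\omega$, could depend on the schedule, and the conclusion would fail.
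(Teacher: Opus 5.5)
Your proposal is correct and follows the paper's proof. Both rest on the identity $\bm r = A_{\alpha}\bm\omega$ being a fixed, $\bar{\bm p}$-independent map applied to an exogenous $\bm\omega$; your pushforward formulation, the explicit moment formulas $\bm\mu_r=A_{\alpha}\mathbb E[\bm\omega]$ and $\Sigma_{\tilde r}=A_{\alpha}\operatorname{Cov}(\bm\omega)A_{\alpha}^\top$, and your reading of ``unsaturated'' as $\bar{\bm p}-\bm\alpha(\bm 1^\top\bm\omega)$ staying within limits (matching its later use in Theorem~\ref{thm:coverage-preserving-redispatch}) simply make the paper's one-line argument more precise.
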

\begin{proof}
For every realization,
$\bm r=\bm p^{\mathrm{actual}}-\bar{\bm p}
=A_{\alpha}\bm\omega$. Since $A_{\alpha}$ is fixed and the distribution of $\bm \omega$ is independent of $\bar{\bm p}$, the distribution of $\bm r$ is unchanged by the choice of $\bar{\bm p}$.
\end{proof}
\begin{remark}
The invariance may fail if the AGC response saturates, the participation factors or generator commitment change, the forecast-error distribution or $K$ depends on the nominal schedule, as may occur under an operating-point-dependent AC linearization.
\end{remark}
\subsection{Residual Congestion Modes} \label{subsec:modes}
Under the fixed, unsaturated AGC policy, \eqref{eq:agc-residual}--\eqref{eq:residual-decomposition} yield
the realized congestion energy
\begin{gather}
    \mathscr{E}(\bm p^{\mathrm{actual}}) = \|\bm \ell(\bm p^{\mathrm{actual}})\|_2^2. \label{eq:congenstion-energy-realization} 
\end{gather}
Consequently, 
\begin{gather}
        \mathbb E [\mathscr{E}(\bm p^{\mathrm{actual}})] = \underbrace{ \|K(\bar{\bm p} + \bm \mu_r)\|_2^2 }_{\text{baseline loading}} + 
\underbrace{ \operatorname{tr}(K\Sigma_{\tilde r} K^\top)}_{\text{residual fluctuation energy}} \label{eq:split}
\end{gather}
where the cross-term vanishes because $\tilde{\bm r}$ is zero-mean. The first term depends on the candidate set-point $\bar{\bm p}$ and the residual mean $\bm \mu_r$, whereas Proposition~\ref{prop:invariance} implies that $\Sigma_{\tilde r}$, and hence the second term, remains invariant under re-dispatch. 

We characterize the uncertainty-induced component through the singular value decomposition  (SVD)
\begin{gather}
    K\Sigma_{\tilde r}^{1/2} = USV^\top, \label{eq:svd}
\end{gather}
where $ S = \operatorname{diag}(s_1, \dots, s_q),$ and $s_1\geqslant \dots \geqslant s_q \geqslant 0$. It follows that
\begin{gather}
    K\Sigma_{\tilde r} K^\top = U S^2 U^\top ~~\Rightarrow~~ \operatorname{tr}(K\Sigma_{\tilde r} K^\top) = \sum_{k=1}^q s_k^2 \label{eq:trace}
\end{gather}
The decomposition in \eqref{eq:svd} is equivalent to the Karhunen--Lo\`eve (KL) expansion~\cite{alexanderian2026primer} of the uncertainty-induced line-loading deviation $K\tilde{\bm r}$. In power-system uncertainty studies, KL expansions are commonly applied in the input space so that retained directions capture dominant variability in uncertain renewable generation or demand before network propagation \cite{safta2016efficient}. Here, we perform the decomposition in the line-loading output space instead. Indeed,
\begin{gather}
\operatorname{Cov}(K\tilde{\bm r})
=
K\Sigma_{\tilde r}K^\top
=
US^2U^\top.
\label{eq:cov}
\end{gather}
Thus, the columns of $U$, i.e., the left singular vectors of \eqref{eq:svd}, are also eigenvectors of the uncertainty-induced line-loading covariance; we refer to them as \textit{residual congestion modes}. The resulting modes therefore prioritize directions by their contribution to line-loading variability, rather than to variability in the primitive forecast errors alone. The quantity $s_k^2$ measures the contribution of mode $k$ to the expected residual fluctuation energy in \eqref{eq:split}.


In practice, $\bm\mu_r$ and $\Sigma_{\tilde r}$ are empirically estimated using a training data set $\mathcal D_{\mathrm{tr}}$ with cardinality $N_{\mathrm{tr}}$ as
\begin{gather}
\widehat{\bm \mu}_r
=
\frac{1}{N_{\mathrm{tr}}}
\sum_{i=1}^{N_{\mathrm{tr}}} \bm r^{(i)}, \label{eq:mean} \\
\widehat{\Sigma}_{\tilde r}
=
\frac{1}{N_{\mathrm{tr}}-1}
\sum_{i=1}^{N_{\mathrm{tr}}}
\left(\bm r^{(i)}-\widehat{\bm \mu}_r\right)
\left(\bm r^{(i)}-\widehat{\bm \mu}_r\right)^{\top}. \label{eq:sigma}
\end{gather}
The empirical congestion modes are then obtained as the SVD of
\begin{equation}
K\widehat{\Sigma}_{\tilde r}^{1/2}
=
\widehat{U}\widehat{S}\widehat{V}^{\top}. \label{eq:empirical-svd}
\end{equation}
The congestion-mode decomposition provides an interpretable representation of the covariance structure of the residual line-loading deviations and can also support reduced-order approximations in large networks (see Remark~\ref{rem:low-rank}). The conformal constructions developed in the subsequent section, however, operate
directly on the observed residual line-loading deviations and do not require truncation of this decomposition.

\section{Split Conformal Calibration} \label{sec:conformal-margins}
In this section, we use split conformal prediction \cite{pmlr-v128-vovk20a} to calibrate the uncertain quantities required for secure re-dispatch in Section~\ref{sec:redispatch}. We first construct margins for the post-AGC line-loading deviations under two distinct coverage objectives. \emph{Linewise calibration} provides a marginal coverage guarantee for each transmission line separately and therefore does not require normalization across lines. In contrast, \emph{joint calibration} provides a simultaneous network-wide guarantee by forming a common nonconformity score across all lines. Since the magnitudes of uncertainty-induced loading deviations can vary significantly across lines and between positive and negative directions, the joint construction uses training-derived one-sided scales to normalize these deviations before aggregation. In both constructions, the training data set $\mathcal D_{\mathrm{tr}}$ is used to estimate the residual mean $\widehat{\bm{\mu}}_r$ using \eqref{eq:mean}, while an independent calibration data set $\mathcal D_{\mathrm{cal}}$ is used for conformal calibration. We also use the calibration data to construct a conformal interval for the aggregate pre-AGC power mismatch, which determines the generation headroom to reserve for the prescribed AGC response.

\subsection{Linewise Coverage} \label{subsec:linewise}
For any residual realization $\bm r$, define the centered uncertainty-induced line-loading deviation
\begin{gather}
    \bm y(\bm r)  \triangleq K\left(\bm r-\widehat{\bm\mu}_r\right), \qquad y_{\ell}(\bm r) = \left[ \bm y(\bm r)\right]_{\ell} \label{eq:deviations-raw}
\end{gather}
For sample $i$, we write $\bm y^{(i)}=\bm y(\bm r^{(i)})$ and $y_\ell^{(i)}= y_{\ell}(\bm r^{(i)})$.
For each calibration residual $\bm r^{(i)} \in \mathcal D_{\mathrm{cal}}$, we define the upper- and lower-tail, line-specific nonconformity scores directly from the corresponding line-loading excursions, 
\begin{gather}
R_{i,\ell}^{+} \triangleq [y_\ell^{(i)}]_+
~~\text{ and }~~
R_{i,\ell}^{-} \triangleq [-y_\ell^{(i)}]_+.\label{eq:non-conformity-scores}
\end{gather}
where $[x]_+ \triangleq \max\{x,0\}$. Since a separate conformal quantile is computed for every line and direction, these scores require no additional line-dependent normalization. Let 
\begin{gather*}
R_{(1),\ell}^{+} \leqslant\cdots\leqslant R_{(N_{\mathrm{cal}}),\ell}^{+} 
~\text{ and }~ R_{(1),\ell}^{-} \leqslant\cdots\leqslant R_{(N_{\mathrm{cal}}),\ell}^{-} 
\end{gather*}
denote the ordered calibration scores for each line. For prescribed tail violation probabilities $\epsilon^+$ and $\epsilon^-$, define 
\begin{gather}
        k^\pm \triangleq
    \left\lceil
    (N_{\mathrm{cal}}+1)(1-\epsilon^\pm)
    \right\rceil,
    \qquad
    q_{1-\epsilon^\pm}^{\pm}(\ell)
    \triangleq
    R_{(k^\pm),\ell}^{\pm},
    \label{eq:conformal-quantiles}
\end{gather}
with $R_{(N_{\mathrm{cal}}+1),\ell}^{\pm}=+\infty$. Then the corresponding linewise upper and lower margins are given by 
\begin{gather}
    m_{\ell}^+ = q^+_{1-\epsilon^+}(\ell) ~~\text{ and }~~ m_{\ell}^- = q^-_{1-\epsilon^-}(\ell) \label{eq:linewise-margins}
\end{gather}
Now, for any future or test residual realization $\bm r$, define the line-$\ell$ coverage event
\begin{gather}
\mathcal M_{\ell}(\bm r) \triangleq
\left\{ -m_\ell^- \leqslant y_{\ell}(\bm r) \leqslant m_\ell^+ \right\}.
\label{eq:margin-event}
\end{gather}
Under exchangeability \cite{pmlr-v128-vovk20a} of the calibration residuals and a future residual $\bm r^{\mathrm{new}}$, the following proposition holds:
\begin{proposition}[Finite-sample linewise coverage]
\label{p erop:linewise-coverage}
Condition on the training data $\mathcal D_{\mathrm{tr}}$, and suppose that the residual mean $\widehat{\bm\mu}_r$ is computed using only $\mathcal D_{\mathrm{tr}}$. Let $\{\bm r^{(i)}\}_{i=1}^{N_{\mathrm{cal}}}$ denote the calibration residuals, and suppose that they and a future residual $\bm r^{\mathrm{new}}$ are exchangeable conditional on $\mathcal D_{\mathrm{tr}}$. If $\epsilon^++\epsilon^-=\epsilon$, then the margins in \eqref{eq:linewise-margins} satisfy
\begin{equation}
\mathbb P\left( \mathcal M_{\ell}(\bm r^{\mathrm{new}})
\,\middle|\,
\mathcal D_{\mathrm{tr}}
\right) \geqslant 1-\epsilon, \quad \forall \ell \in \mathcal E.
\label{eq:conditional-linewise-coverage}
\end{equation}
\end{proposition}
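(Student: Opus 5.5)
The plan is to reduce the two-sided coverage event to two one-sided events, apply the standard split-conformal rank argument to each, and combine them with a union bound.

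Throughout, we condition on $\mathcal D_{\mathrm{tr}}$. Then $\widehat{\bm\mu}_r$, and hence the map $\bm r\mapsto \bm y(\bm r)=K(\bm r-\widehat{\bm\mu}_r)$, is a fixed deterministic function. Since the calibration residuals and $\bm r^{\mathrm{new}}$ are exchangeable conditional on $\mathcal D_{\mathrm{tr}}$, applying this fixed map coordinatewise preserves exchangeability. So for a fixed line $\ell$, the $N_{\mathrm{cal}}+1$ scalars $R^{+}_{1,\ell},\dots,R^{+}_{N_{\mathrm{cal}},\ell}$ and $R^{+}_{\mathrm{new},\ell}\triangleq[y_\ell(\bm r^{\mathrm{new}})]_+$ are exchangeable. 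The same holds for the minus scores.

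First, I will show that the complement of $\mathcal M_\ell(\bm r^{\mathrm{new}})$ is contained in $\{R^{+}_{\mathrm{new},\ell}>m_\ell^+\}\cup\{R^{-}_{\mathrm{new},\ell}>m_\ell^-\}$. This holds because both margins are nonnegative, being order statistics of nonnegative scores. Hence $y_\ell>m_\ell^+\geqslant 0$ implies $[y_\ell]_+=y_\ell>m_\ell^+$, and symmetrically for the lower tail.

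Second, I will prove the one-sided bound $\mathbb P(R^{+}_{\mathrm{new},\ell}>R^{+}_{(k^+),\ell}\mid\mathcal D_{\mathrm{tr}})\leqslant \epsilon^+$ by the usual rank argument.
\begin{itemize}
\item If $k^+>N_{\mathrm{cal}}$, the margin is $+\infty$ and the probability is zero.
\item Otherwise, the event $R^{+}_{\mathrm{new}}>R^{+}_{(k^+)}$ implies that $R^{+}_{\mathrm{new}}$ is strictly larger than at least $k^+$ of the other $N_{\mathrm{cal}}$ values. Its rank among all $N_{\mathrm{cal}}+1$ values, with ties broken uniformly at random, must then exceed $k^+$.
\item By exchangeability, that randomized rank is uniform on $\{1,\dots,N_{\mathrm{cal}}+1\}$. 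So the probability is at most $(N_{\mathrm{cal}}+1-k^+)/(N_{\mathrm{cal}}+1)\leqslant \epsilon^+$, using $k^+\geqslant (N_{\mathrm{cal}}+1)(1-\epsilon^+)$.
\end{itemize}
The identical argument gives the bound $\epsilon^-$ for the lower tail. A union bound then yields a violation probability of at most $\epsilon^++\epsilon^-=\epsilon$, which is \eqref{eq:conditional-linewise-coverage}, for every $\ell\in\mathcal E$.

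The main obstacle is handling ties cleanly, since many scores are exactly zero because of the $[\cdot]_+$ truncation. I would handle this with the randomized tie-breaking device above: a strict exceedance of the $k^+$-th order statistic forces the randomized rank above $k^+$ regardless of how ties are broken, so no continuity assumption is needed.

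A second subtlety is that the conditioning must make $\widehat{\bm\mu}_r$ fixed. This is exactly why the statement requires $\widehat{\bm\mu}_r$ to be computed from $\mathcal D_{\mathrm{tr}}$ alone, and why the exchangeability hypothesis is stated conditional on $\mathcal D_{\mathrm{tr}}$.
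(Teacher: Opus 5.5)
Your proposal is correct and follows the paper's proof: condition on $\mathcal D_{\mathrm{tr}}$ so that $\widehat{\bm\mu}_r$ is fixed, pass exchangeability to the one-sided scores, apply the split-conformal rank bound to each tail, and combine the tails with a union bound. You also spell out two steps the paper leaves implicit, both correctly: why violating a margin is the same as a strict score exceedance (the margins are nonnegative), and how ties among the many zero scores are handled with randomized ranks.
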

\begin{proof}
Conditioned on $\mathcal D_{\mathrm{tr}}$, the residual mean $\widehat{\bm \mu}_r$ is fixed. Exchangeability of the calibration and future residuals therefore induces exchangeability of the corresponding upper- and lower-tail nonconformity scores. The split-conformal rank argument \cite{pmlr-v128-vovk20a} yields
\begin{gather*}
\mathbb P\left( R_{\mathrm{new},\ell}^{\pm}> q_{1-\epsilon^{\pm}}^{\pm}(\ell)
\,\middle|\,
\mathcal D_{\mathrm{tr}} \right) \leqslant \epsilon^\pm.
\end{gather*}
The two events correspond to violations of the upper and lower bounds defining $\mathcal M_{\ell}(\bm r^{\mathrm{new}})$. The union bound therefore yields 
$$\mathbb P\left( \mathcal M_{\ell}(\bm r^{\mathrm{new}})^c
\,\middle|\,
\mathcal D_{\mathrm{tr}}
\right) \leqslant \epsilon^+ + \epsilon^- = \epsilon ~~ \forall \ell \in \mathcal E$$ where $\mathcal M_{\ell}(\bm r^{\mathrm{new}})^c$ denotes the complement of \eqref{eq:margin-event}, completing the proof.
\end{proof}
The guarantee is distribution-free and finite-sample, i.e., no parametric model for the residual distribution is required. In this setting, exchangeability requires that the calibration and future residuals arise under the same forecasting regime and fixed AGC policy. 

\subsection{Joint Coverage with One-sided Normalization} 
Linewise calibration guarantees coverage for each transmission line individually, but does not ensure simultaneous coverage across all lines. To obtain a network-wide guarantee, we instead define a single nonconformity score for each residual realization based on the largest deviation across the network. Directly taking the maximum of the raw deviations in \eqref{eq:deviations-raw}, however, would compare lines with different levels of uncertainty-induced variability and would result in a common absolute loading margin without accounting for their relative variability. We therefore introduce the positive and negative one-sided scales $a_{\ell}^+$ and $a_{\ell}^-$, computed exclusively from the training data as
\begin{equation}
\begin{gathered}
    a_{\ell}^{\pm}
    =
    \left(
        \eta^2+
        \frac{1}{N_{\mathrm{tr}}-1}
        \sum_{i=1}^{N_{\mathrm{tr}}}
        \left[\pm ~y_{\ell}^{(i)}\right]_{+}^{2}
    \right)^{1/2}.
    \label{eq:directional_scales} 
\end{gathered}
\end{equation}
Here, $\eta>0$ prevents degenerate scales. These quantities normalize the line-loading excursions before taking the maximum across lines; they determine how the joint margin is allocated across lines, while the subsequent conformal quantile determines its probabilistic magnitude.  
\begin{proposition}[Finite-sample joint coverage]
\label{prop:joint-conformal}
Define the one-sided joint nonconformity scores
\begin{gather*}
    R_i^{\mathrm{joint}+}
    \triangleq
    \max_{\ell\in\mathcal E}
    \left\{
        \frac{[y_\ell^{(i)}]_+}{a_\ell^+},
    \right\}, ~~
    R_i^{\mathrm{joint}-}
    \triangleq
    \max_{\ell\in\mathcal E}
    \left\{
        \frac{[-y_\ell^{(i)}]_+}{a_\ell^-}
    \right\},
\end{gather*}
Let $q_{1-\epsilon^\pm}^{\mathrm{joint}\pm}$ be their split-conformal quantiles defined analogous to \eqref{eq:conformal-quantiles}, and set
\begin{gather*}
    m_\ell^+
    =
    q_{1-\epsilon^+}^{\mathrm{joint}+}a_\ell^+,
    \qquad
    m_\ell^-
    =
    q_{1-\epsilon^-}^{\mathrm{joint}-}a_\ell^-.\label{eq:joint-margins}
\end{gather*}
If $\epsilon = \epsilon^+ + \epsilon^-$, then under exchangeability conditional on $\mathcal D_{\mathrm{tr}}$, the above margins satisfy
\begin{gather*}
    \mathbb P\left(
        \bigcap_{\ell \in \mathcal E} \mathcal M_{\ell}(\bm r^{\mathrm{new}}) \,|\, \mathcal D_{\mathrm{tr}}
    \right)
    \geqslant 1-\epsilon. \label{eq:guarantee-joint}
\end{gather*}
\end{proposition}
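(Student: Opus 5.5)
The proof will mirror the one for the linewise result, with the per-line scores replaced by the two scalar joint scores. The key observation is that the scales $a_\ell^\pm$ and the mean $\widehat{\bm\mu}_r$ depend only on $\mathcal D_{\mathrm{tr}}$. Hence, after conditioning on $\mathcal D_{\mathrm{tr}}$, the maps
\[
\bm r \mapsto R^{\mathrm{joint}\pm}(\bm r) = \max_{\ell\in\mathcal E}\frac{[\pm y_\ell(\bm r)]_+}{a_\ell^\pm}
\]
are fixed deterministic functions. Applying a fixed function to each element of an exchangeable sequence $\bm r^{(1)},\dots,\bm r^{(N_{\mathrm{cal}})},\bm r^{\mathrm{new}}$ gives an exchangeable sequence of scores $R_1^{\mathrm{joint}\pm},\dots,R_{N_{\mathrm{cal}}}^{\mathrm{joint}\pm},R_{\mathrm{new}}^{\mathrm{joint}\pm}$. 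The split-conformal rank argument \cite{pmlr-v128-vovk20a} with $k^\pm=\lceil (N_{\mathrm{cal}}+1)(1-\epsilon^\pm)\rceil$ then yields
\[
\mathbb P\bigl(R_{\mathrm{new}}^{\mathrm{joint}\pm} > q^{\mathrm{joint}\pm}_{1-\epsilon^\pm}\,\big|\,\mathcal D_{\mathrm{tr}}\bigr)\leqslant \epsilon^\pm .
\]
If ties among the scores are possible, I would use the standard version of the argument that handles ties, which gives the same inequality.

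The second step converts a single scalar event into simultaneous linewise containment. Each $a_\ell^\pm \geqslant \eta > 0$, so dividing by it is legitimate and preserves inequalities. The event $R_{\mathrm{new}}^{\mathrm{joint}+}\leqslant q^{\mathrm{joint}+}$ holds if and only if $[y_\ell(\bm r^{\mathrm{new}})]_+\leqslant q^{\mathrm{joint}+} a_\ell^+ = m_\ell^+$ for every $\ell$. Since $q^{\mathrm{joint}+}\geqslant 0$, this is equivalent to $y_\ell(\bm r^{\mathrm{new}})\leqslant m_\ell^+$ for all $\ell$. Symmetrically, $R_{\mathrm{new}}^{\mathrm{joint}-}\leqslant q^{\mathrm{joint}-}$ is equivalent to $y_\ell(\bm r^{\mathrm{new}})\geqslant -m_\ell^-$ for all $\ell$. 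Therefore
\[
\Bigl(\bigcap_{\ell}\mathcal M_\ell(\bm r^{\mathrm{new}})\Bigr)^c
= \{R_{\mathrm{new}}^{\mathrm{joint}+}>q^{\mathrm{joint}+}\}\cup\{R_{\mathrm{new}}^{\mathrm{joint}-}>q^{\mathrm{joint}-}\}.
\]
Applying the union bound gives a conditional failure probability of at most $\epsilon^++\epsilon^-=\epsilon$, which is the claim.

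The only delicate point is the first step. The normalization must not use calibration or test data, because otherwise the scores would not be exchangeable. The construction in \eqref{eq:directional_scales} uses training data only, so this holds. I also need to state the infinite-quantile convention: when $k^\pm>N_{\mathrm{cal}}$, the quantile is $+\infty$ and the corresponding bound holds trivially.
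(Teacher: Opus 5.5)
Your proposal is correct and follows the paper's route. The paper omits this proof as analogous to the linewise one: conditioning on $\mathcal D_{\mathrm{tr}}$ fixes $\widehat{\bm\mu}_r$ and $a_\ell^\pm$, so the two scalar joint scores are exchangeable, the split-conformal rank argument bounds each one-sided exceedance by $\epsilon^\pm$, and a union bound gives $\epsilon$. You also spell out the step the paper leaves implicit, namely that $\{R^{\mathrm{joint}\pm}_{\mathrm{new}}\leqslant q^{\mathrm{joint}\pm}_{1-\epsilon^\pm}\}$ is exactly simultaneous one-sided containment over all lines, which follows from $a_\ell^\pm\geqslant\eta>0$ and $q^{\mathrm{joint}\pm}_{1-\epsilon^\pm}\geqslant 0$.
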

The proof is analogous to that of Proposition \ref{p erop:linewise-coverage} and is omitted. Since joint calibration controls the maximum excursion across all lines simultaneously, it is generally more conservative than separate linewise calibration; the degree of conservatism depends on the number of lines and the dependence structure of their deviations.

\begin{remark}[Low-rank approximation] \label{rem:low-rank}
For large networks, a rank-$t$ approximation of \eqref{eq:empirical-svd} may be used to represent the dominant residual congestion modes, with $t$ chosen to capture a prescribed fraction of the empirical residual fluctuation energy. The corresponding projection $\bm y_t^{(i)} = \widehat U_t\widehat U_t^\top \bm y^{(i)}$ provides a rank-$t$ approximation of the centered line-loading deviation. The conformal guarantees above, however, are based on the original deviations $\bm y^{(i)}$.
\end{remark}

\subsection{Calibration of AGC Headroom} \label{subsec:AGC-headroom}
In addition to transmission-line margins, secure re-dispatch must reserve sufficient AGC headroom for the prescribed balancing response. For each calibration sample $i$, define the aggregate pre-AGC mismatch as
\begin{gather}
    \delta^{(i)} \triangleq \bm 1^\top \bm\omega^{(i)}.  \label{eq:aggregate-mismatch}
\end{gather}
Applying split-conformal calibration to $\{\delta^{(i)}\}_{i=1}^{N_{\mathrm{cal}}}$ yields an interval $[\widehat{\delta}^{\mathrm{low}}, \widehat{\delta}^{\mathrm{up}}]$ such that, under exchangeability,
$$ \mathbb P(\mathcal H^\mathrm{new} \,|\,
        \mathcal D_{\mathrm{tr}}) \geqslant 1-\epsilon_{\mathrm H} \text{ where, } 
\mathcal H^\mathrm{new} \triangleq \left\{\widehat{\delta}^{\mathrm{low}} \leqslant \delta^\mathrm{new} \leqslant \widehat{\Delta}^{\mathrm{up}} \right\}.$$
Since the AGC adjustment corresponding to a mismatch $\Delta$ is $-\bm\alpha\Delta$, these bounds determine the AGC headroom that must be reserved for the fixed policy. Typically, $\widehat{\delta}^{\mathrm{low}}<0< \widehat{\delta}^{\mathrm{up}}$: the upper mismatch bound determines the required downward headroom, while the lower bound determines the required upward headroom. We use these calibrated bounds in the next section to obtain the corrected dispatch.

\section{Secure Re-dispatch} \label{sec:redispatch}

In Section~\ref{sec:conformal-margins}, historical data was used to quantify the uncertainty the operating point must accommodate while leaving the nominal $\bar{\bm p}$ unchanged.
In this section, we use the previously calibrated quantities, namely the estimated residual mean $\widehat{\bm\mu}_r$, the calibrated line-loading margins $\bm m^+$ and $\bm m^-$, and the aggregate-mismatch interval $[\widehat{\delta}^{\mathrm{low}}, \widehat{\delta}^{\mathrm{up}}]$ required to reserve AGC headroom to now determine a corrected schedule.

Starting from the nominal schedule $\bar{\bm p}$, we introduce a preventive re-dispatch adjustment $\Delta\bm p$ and define
\begin{gather}
    \bar{\bm p}^{c} \triangleq \bar{\bm p}+\Delta\bm p. \label{eq:corrected-schedule}
\end{gather}
Here, $\Delta\bm p$ is a preventive re-dispatch decision made before the future forecast error is realized; it is distinct from the random
post-AGC residual $\bm r$ in \eqref{eq:agc-residual}. For a future realization $\bm r^{\mathrm{new}}$, the post-AGC injection is therefore $\bar{\bm p}^{c}+\bm r^{\mathrm{new}}$. 
During re-dispatch, the smallest balanced adjustment $\Delta\bm p$ is chosen such that the corrected schedule leaves sufficient transmission capacity for the calibrated residual deviations and sufficient generation headroom for the prescribed AGC response. Since the residual distribution is invariant to the candidate re-dispatch under Proposition~\ref{prop:invariance}, the calibrated uncertainty quantities remain applicable while optimizing the corrected schedule.

For compactness, $\Delta \bm p$ represents controllable adjustments at the bus level. Let $\bm p^{\min}$ and $\bm p^{\max}$ denote the bus-level
limits obtained by aggregating the controllable generation limits at each bus. At buses without controllable resources, $\Delta p_j=0$. The secure re-dispatch is obtained as the solution of the following convex quadratic program:
\begin{subequations}
\label{eq:secure-redispatch}
\begin{flalign}
    \min\quad &\|\Delta\bm p\|_2^2 \label{eq:redispatch-objective}\\
    \mathrm{s.t.}\quad &\bm 1^\top\Delta\bm p=0, \label{eq:redispatch-balance}\\
    &\bm p^{\min}
    +\bm\alpha\widehat{\delta}^{\mathrm{up}}
    \leqslant
    \bar{\bm p}+\Delta\bm p
    \leqslant
    \bm p^{\max}
    +\bm\alpha\widehat{\delta}^{\mathrm{low}},
    \label{eq:redispatch-headroom}\\
    &-\bm 1+\bm m^- \leqslant
    K\left( \bar{\bm p}+\Delta\bm p +\widehat{\bm\mu}_r \right)
    \leqslant
    \bm 1-\bm m^+. \label{eq:redispatch-line-constraints}
\end{flalign}
\end{subequations}
\eqref{eq:redispatch-balance} preserves nodal power balance, while \eqref{eq:redispatch-headroom} ensures sufficient upward and downward headroom for the prescribed aggregate-mismatch range. The tightened line constraints \eqref{eq:redispatch-line-constraints} account for the estimated residual mean and reserve the calibrated margins around the corrected baseline loading. The following result formalizes how these deterministic reserve constraints convert the calibrated uncertainty events into security guarantees for the corrected schedule.

\begin{theorem}[Linewise security guarantee for the corrected dispatch]
\label{thm:coverage-preserving-redispatch}
Let $\Delta\bm p^\star$ be any feasible solution of \eqref{eq:secure-redispatch}, and define 
\begin{gather*}
    \bar{\bm p}^{c,\star}
    \triangleq
    \bar{\bm p}+\Delta\bm p^\star.
\end{gather*}
Suppose the residual model satisfies the fixed-policy assumptions of Proposition~\ref{prop:invariance}, and let the calibrated margins satisfy
\begin{gather*}
    \mathbb P\left(\mathcal M_{\ell}(\bm r^{\mathrm{new}}) 
    \,\middle|\,
    \mathcal D_{\mathrm{tr}} \right) \geqslant 1-\epsilon
\end{gather*}
for each line $\ell \in \mathcal E$. Let the calibrated AGC-headroom interval satisfy
\begin{gather*}
    \mathbb P(\mathcal H^\mathrm{new} 
    \,|\,
    \mathcal D_{\mathrm{tr}} ) \geqslant 1-\epsilon_{\mathrm H}.
\end{gather*}
Then on $\mathcal M_{\ell}(\bm r^{\mathrm{new}}) \cap \mathcal H^{\mathrm{new}}$, the prescribed AGC response remains unsaturated and
\begin{gather}
    -1 \leqslant \left[ K\left(\bar{\bm p}^{c,\star} +\bm r^{\mathrm{new}} \right) \right]_\ell \leqslant 1, 
    \label{eq:redispatch-security-guarantee}
\end{gather}
Consequently, for every line $\ell \in \mathcal E$
\begin{gather*}
\mathbb P(\text{$\ell$ is secure and AGC remains unsaturated}) \geqslant 1- \epsilon_{\mathrm{H}} - \epsilon.
\end{gather*}
\end{theorem}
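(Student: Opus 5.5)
The plan is a deterministic argument on the intersection event, followed by a union bound. Fix a feasible $\Delta\bm p^\star$ and write $\bar{\bm p}^{c,\star}=\bar{\bm p}+\Delta\bm p^\star$. By Proposition~\ref{prop:invariance}, the post-AGC residual under the corrected schedule is still $\bm r^{\mathrm{new}}=A_\alpha\bm\omega^{\mathrm{new}}$, with the same law as before. Hence the calibration events $\mathcal M_\ell(\bm r^{\mathrm{new}})$ and $\mathcal H^{\mathrm{new}}$, and their probability bounds conditional on $\mathcal D_{\mathrm{tr}}$, apply verbatim to the corrected dispatch. The key point is that the margins were computed from data that never depended on $\Delta\bm p^\star$. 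I treat $\Delta\bm p^\star$ as fixed given $\mathcal D_{\mathrm{tr}}$ (and $\mathcal D_{\mathrm{cal}}$).

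\emph{Line security.} Decompose the loading as
\begin{gather*}
K(\bar{\bm p}^{c,\star}+\bm r^{\mathrm{new}})=K(\bar{\bm p}^{c,\star}+\widehat{\bm\mu}_r)+\bm y(\bm r^{\mathrm{new}}),
\end{gather*}
using \eqref{eq:deviations-raw}. On $\mathcal M_\ell(\bm r^{\mathrm{new}})$ we have $-m_\ell^-\leqslant y_\ell(\bm r^{\mathrm{new}})\leqslant m_\ell^+$. Adding this to the $\ell$-th row of \eqref{eq:redispatch-line-constraints}, namely $-1+m_\ell^-\leqslant[K(\bar{\bm p}^{c,\star}+\widehat{\bm\mu}_r)]_\ell\leqslant 1-m_\ell^+$, yields \eqref{eq:redispatch-security-guarantee} directly.

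\emph{Unsaturated AGC.} On $\mathcal H^{\mathrm{new}}$ we have $\widehat\delta^{\mathrm{low}}\leqslant\delta^{\mathrm{new}}\leqslant\widehat\delta^{\mathrm{up}}$. Since $\bm\alpha\geqslant 0$, this gives componentwise $\bm\alpha\widehat\delta^{\mathrm{low}}\leqslant\bm\alpha\delta^{\mathrm{new}}\leqslant\bm\alpha\widehat\delta^{\mathrm{up}}$. The AGC set-points are $\bar{\bm p}^{c,\star}-\bm\alpha\delta^{\mathrm{new}}$. Combining with \eqref{eq:redispatch-headroom},
\begin{gather*}
\bm p^{\min}\leqslant \bar{\bm p}^{c,\star}-\bm\alpha\widehat\delta^{\mathrm{up}}\leqslant \bar{\bm p}^{c,\star}-\bm\alpha\delta^{\mathrm{new}}\leqslant \bar{\bm p}^{c,\star}-\bm\alpha\widehat\delta^{\mathrm{low}}\leqslant\bm p^{\max}.
\end{gather*}
So the prescribed response stays within the controllable limits and does not saturate. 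This is exactly the regime in which the residual model $\bm r=A_\alpha\bm\omega$, and hence the invariance, is valid, so the argument is self-consistent. I will note that this reading interprets the headroom limits as applying to the controllable generation at each bus, consistent with the aggregation convention stated before \eqref{eq:secure-redispatch}.

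\emph{Probability bound.} By the union bound and the two hypotheses,
\begin{gather*}
\mathbb P\big((\mathcal M_\ell\cap\mathcal H^{\mathrm{new}})^c\,|\,\mathcal D_{\mathrm{tr}}\big)\leqslant\epsilon+\epsilon_{\mathrm H}.
\end{gather*}
The event in the conclusion contains $\mathcal M_\ell\cap\mathcal H^{\mathrm{new}}$, which gives the bound conditionally on $\mathcal D_{\mathrm{tr}}$. Taking expectations over $\mathcal D_{\mathrm{tr}}$ then gives the unconditional statement.

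The main obstacle is the legitimacy of transferring the calibrated guarantees to $\bar{\bm p}^{c,\star}$, which is itself chosen using the calibrated margins. I will handle this by conditioning on $\mathcal D_{\mathrm{tr}}\cup\mathcal D_{\mathrm{cal}}$ implicitly: the events $\mathcal M_\ell$ and $\mathcal H^{\mathrm{new}}$ depend only on $\bm r^{\mathrm{new}}$, $\bm\omega^{\mathrm{new}}$ and the calibrated quantities, not on $\Delta\bm p^\star$. Proposition~\ref{prop:invariance} then ensures that the law of $\bm r^{\mathrm{new}}$ is unaffected by the schedule. As a result, no selection effect enters the probabilities of these events.
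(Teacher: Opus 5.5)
Your proposal is correct and follows the paper's proof step for step. You add the $\ell$-th tightened line constraint to the margin event $\mathcal M_\ell(\bm r^{\mathrm{new}})$, use $\bm\alpha\geqslant\bm 0$ together with the headroom constraint on $\mathcal H^{\mathrm{new}}$, and finish with the union bound conditional on $\mathcal D_{\mathrm{tr}}$. Your extra remarks on selection effects and on the self-consistency of the unsaturated regime are sound additions that the paper omits; just avoid literally conditioning on $\mathcal D_{\mathrm{cal}}$, since split-conformal coverage is only marginal over the calibration set. You do not need that conditioning anyway, because $\mathcal M_\ell(\bm r^{\mathrm{new}})\cap\mathcal H^{\mathrm{new}}\subseteq\{\ell\text{ secure and AGC unsaturated}\}$ holds pointwise for every feasible $\Delta\bm p^\star$, which is exactly what your final paragraph uses.
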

\begin{proof}
Feasibility of \eqref{eq:redispatch-line-constraints} implies
\begin{gather*}
    -1+m_\ell^-
    \leqslant
    \left[ K\left( \bar{\bm p}^{c,\star} +\widehat{\bm\mu}_r \right) \right]_\ell
    \leqslant
    1-m_\ell^+.
    \label{eq:proof-tightened-line}
\end{gather*}
For event $\mathcal M_{\ell}(\bm r^{\mathrm{new}})$, we have
\begin{gather*}
    -m_\ell^-
    \leqslant
    \left[ K\left( \bm r^{\mathrm{new}} -\widehat{\bm\mu}_r \right) \right]_\ell
    \leqslant
    m_\ell^+.
    \label{eq:proof-residual-event}
\end{gather*}
Adding the two inequalities yields \eqref{eq:redispatch-security-guarantee}. For the headroom part, feasibility of \eqref{eq:redispatch-headroom} yields
\begin{gather*}
    \bm p^{\min}
    +\bm\alpha\widehat{\delta}^{\mathrm{up}}
    \leqslant
    \bar{\bm p}^{c,\star}
    \leqslant
    \bm p^{\max}
    +\bm\alpha\widehat{\delta}^{\mathrm{low}}.
\end{gather*}
Since $\bm\alpha\geqslant\bm 0$, on $\mathcal H^{\mathrm{new}}$,
\begin{gather*}
    \bm p^{\min} \leqslant
    \bar{\bm p}^{c,\star} -\bm\alpha\delta^{\mathrm{new}}
    \leqslant \bm p^{\max}.
\end{gather*}
Hence the prescribed AGC response remains unsaturated on $\mathcal H^{\mathrm{new}}$. Finally, using union bound, we have for every $\ell \in \mathcal E$ $$\mathbb P(\mathcal M_{\ell}(\bm r^{\mathrm{new}}) \cap \mathcal H^{\mathrm{new}} \,|\, \mathcal D_{\mathrm{tr}}) \geqslant 1- \epsilon_{\mathrm{H}} - \epsilon;$$ no independence assumption is required.
\end{proof}

\section{Results}  \label{sec:results}
All computational experiments use the RTS-GMLC test system~\cite{barrows2019ieee}, comprising $73$ buses, $120$ branches, $158$ generators, and $51$ loads. We obtain load and renewable-generation forecasts and realizations from the associated RTS-GMLC time-series data. We select one high-solar forecast hour per July day, yielding 31 operating points treated as a common forecasting regime where exchageability holds; distribution shift and temporal non-exchangeability are outside scope of the current article. For each point, a deterministic DC-OPF gives the nominal dispatch, with $12$ real-time realizations per point ($372$ samples) split chronologically into training, calibration, and test sets of $120$, $120$, and $132$. 
The source code, RTS-GMLC data used in the experiments, balancing policies, and proposed algorithms are available at \texttt{\url{https://github.com/kaarthiksundar/PowerGridConformalMargins}}. 

\subsection{Empirical Coverage: Joint vs. Linewise Calibration}
We calibrate both linewise and joint conformal margins using the training and calibration sets, with nominal coverage levels $(1-\epsilon)\in\{0.90,0.95\}$ and an equal allocation between the upper and lower tails, $\epsilon^+=\epsilon^-=\epsilon/2$. Their out-of-sample performance is evaluated on the $N_{\mathrm{te}}=132$ test residuals. To distinguish the two calibration procedures, let $\mathcal M_{\ell,s}^{(i)}$ denote the line-$\ell$ margin event for test residual $i$, where $s\in\{\mathrm J,\mathrm L\}$ denotes joint or linewise calibration, respectively:
\begin{gather*}
\mathcal M_{\ell,s}^{(i)} \triangleq \left\{
-m_{\ell,s}^- \leqslant 
\left[ K\left( \bm r^{(i)}-\widehat{\bm\mu}_r \right) \right]_\ell \leqslant
m_{\ell,s}^+
\right\}.
\end{gather*}
For the jointly calibrated margins, we report the empirical joint coverage
\begin{gather*}
p_{\mathrm J} = \frac{1}{N_{\mathrm{te}}} \sum_{i=1}^{N_{\mathrm{te}}}
\mathbf 1 \left\{ \bigcap_{\ell\in\mathcal E} \mathcal M_{\ell,\mathrm J}^{(i)}
\right\}.
\end{gather*}
For the linewise margins, we report the average empirical marginal coverage
\begin{gather*}
p_{\mathrm L} = \frac{1}{|\mathcal E|} \sum_{\ell\in\mathcal E} \frac{1}{N_{\mathrm{te}}}
\sum_{i=1}^{N_{\mathrm{te}}} \mathbf 1
\left\{ \mathcal M_{\ell,\mathrm L}^{(i)} \right\},
\end{gather*}
as well as the empirical joint coverage obtained by requiring all linewise margin events to hold:
\begin{gather*}
p_{\mathrm L}^{\cap} = \frac{1}{N_{\mathrm{te}}} \sum_{i=1}^{N_{\mathrm{te}}}
\mathbf 1 \left\{ \bigcap_{\ell\in\mathcal E} \mathcal M_{\ell,\mathrm L}^{(i)} \right\}.
\end{gather*}
We report $p_{\mathrm L}^{\cap}$ only for comparison, since independently calibrated linewise margins provide marginal, rather than joint network-wide, coverage guarantees.

The Table~\ref{tab:empirical-coverage} reports the resulting test-set coverage frequencies. The jointly calibrated margins achieve empirical joint coverage of $87.12\%$ and $95.45\%$ at nominal levels of $90\%$ and $95\%$, respectively. In contrast, intersecting independently calibrated linewise events yields joint coverage of only $5.30\%$ and $6.82\%$. This illustrates that high marginal coverage on individual lines does not imply joint coverage across the network. The reported test-set frequencies should not be interpreted as exact realizations of the nominal coverage levels, since the conformal guarantees are probabilistic statements under exchangeability rather than deterministic guarantees for a fixed finite test set.
\begin{table}[t]
\centering
\caption{Empirical test-set coverage under joint and linewise conformal calibration. All entries are percentages.}
\label{tab:empirical-coverage}
\begin{tabular}{cccc}
\toprule
$1-\epsilon$ &
$p_{\mathrm J}$ &
$p_{\mathrm L}$ &
$p_{\mathrm L}^{\cap}$  \\
\midrule
$90$ & $87.12$ & $85.71$ & $5.30$ \\
$95$ & $95.45$ & $89.91$ & $6.82$  \\
\bottomrule
\end{tabular}
\end{table}

Figure~\ref{fig:line-margin-histograms} compares the distributions across transmission lines of the calibrated upper and lower margins obtained from the joint and linewise procedures. 
As expected, joint calibration produces wider, more heterogeneous margins because it enforces simultaneous network-wide coverage.
For some branches, the jointly calibrated margins consume a large fraction of the line-capacity range. Recall that the tightened feasible interval for line $\ell$ is $\left[-1+m_{\ell}^{-},1-m_{\ell}^{+}\right]$.
Hence, if $m_{\ell}^{-}+m_{\ell}^{+}>2$, this interval is empty regardless of the chosen re-dispatch.  Even when this condition is not reached, large joint margins can substantially restrict the feasible corrected schedule. This conservatism reflects the stronger requirement imposed by a distribution-free joint network-wide coverage certificate.
\begin{figure}[htbp]
    \centering
    \includegraphics[scale=1.1]{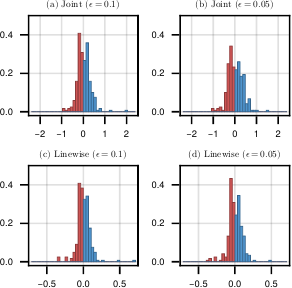}
    \caption{Distribution across transmission lines of the calibrated upper and lower line-loading margins under joint and linewise conformal calibration. Joint calibration provides simultaneous network-wide coverage but generally yields more conservative margins than separate linewise calibration.}
    \label{fig:line-margin-histograms}
\end{figure}

\subsection{Out-of-Sample Security and Re-dispatch Cost}
We use linewise margins for the re-dispatch study because joint margins can substantially restrict or eliminate re-dispatch feasibility. We evaluate out-of-sample performance using $1-\epsilon=0.90$ and AGC-headroom coverage $1-\epsilon_{\mathrm H}=0.95$ on the $132$ held-out real-time realizations. 

Figure~\ref{fig:original} shows the line-loadings under the original forecast-based dispatch. Residual uncertainty produces multiple thermal-limit violations, with the largest overload reaching approximately $15\%$ above the corresponding limit. After applying the secure re-dispatch, Figure~\ref{fig:new} shows that only one violation remains across the test set, with an exceedance below $0.3\%$. Consistent with this reduction, the corrected dispatch achieves an average empirical linewise security rate of $99.99\%$ and an empirical simultaneous line-security rate of $99.24\%$, with all lines satisfying their limits in $131$ of the $132$ test realizations. The AGC-adjusted injections also satisfy the prescribed resource limits in all test realizations, yielding $100\%$ empirical coverage. The observed line-security rates can exceed the nominal conformal margin level because the calibrated margin event is sufficient, but not necessary, for satisfying the thermal limits; excursions beyond a margin need not cause an overload when additional transmission headroom is available.

These security improvements are achieved with modest economic impact. Although \eqref{eq:redispatch-objective} minimizes squared set-point movement rather than generation cost, the secure re-dispatch increases operating cost by only $1.46\%$ on average, with a maximum increase of $7.79\%$. Thus, the proposed secure re-dispatch substantially reduces out-of-sample congestion while largely preserving the original deterministic dispatch.

\begin{figure}
    \centering
    \includegraphics[scale=1.1]{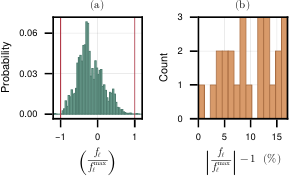}
    \caption{Line loadings under the original forecast-based dispatch. (a) Test-set line-flow distribution with thermal limits at $\pm 1$. (b) Relative exceedance of violated line limits; the maximum overload is approximately $15\%$.}
    \label{fig:original}
\end{figure}

\begin{figure}
    \centering
    \includegraphics[scale=1.1]{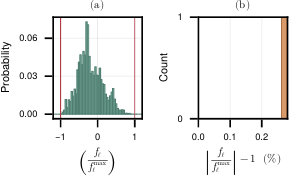}
    \caption{Empirical test-set line-loadings after secure re-dispatch with $1-\epsilon=0.90$ and $1-\epsilon_{\mathrm H}=0.95$. Only one line-limit violation remains, with an exceedance below $0.3\%$.}
    \label{fig:new}
\end{figure}

\section{Conclusion} \label{sec:conclusion}
This article presents a data-driven secure re-dispatch framework to mitigate uncertainty-induced transmission congestion while preserving the operator's deterministic dispatch procedure. Under a fixed unsaturated AGC policy and exogenous forecast errors, the post-AGC residual distribution is invariant to the candidate re-dispatch, enabling split conformal calibration of post-AGC line-loading deviations and aggregate mismatch independently of the corrected schedule. The resulting transmission margins and AGC-headroom requirements are incorporated into a convex secure re-dispatch problem that minimally adjusts the nominal schedule while providing probabilistic protection against line overloads and AGC saturation. Computational experiments on the RTS-GMLC network show substantial reductions in out-of-sample line-limit violations with modest operating-cost increases. Future work will consider AC linearization and changing or saturated balancing policies.

\bibliography{refs.bib}
\bibliographystyle{IEEEtran}
\end{document}